\documentclass[leqno]{article}
\usepackage{amsmath}
\usepackage{amssymb}
\usepackage{amsthm}
\allowdisplaybreaks
%\setlength{\textwidth}{5.4in}
%\setlength{\oddsidemargin}{0.5in}
%%%%%%%%%%%%%%%%%%%%%%%%%%%%%%%%%%%%%%%%%%
% Suggested Alternative Dimensions 
%%%%%%%%%%%%%%%%%%%%%%%%%%%%%%%%%%%%%%%%%%
\headheight=0mm  
\headsep=10mm  
\topmargin=-15mm  
\oddsidemargin=-3mm  
\evensidemargin=-3mm  
\textheight=230mm  
\textwidth=165mm  
  
%%%%%%%%%%%%%%%%%%%%%%%%%%%%%%%%%%%%%%%%%%
\title{A study on the bilinear equation
 of the sixth Painlev\'e transcendents}

\author{Tatsuya \textsc{Hosoi}\footnote{Graduate School of Mathematical Sciences,
University of Tokyo, Tokyo
153-8914, Japan.\newline e-mail: \texttt{hosoi@ms.u-tokyo.ac.jp}}
          ~and Hidetaka \textsc{Sakai}\footnote{Graduate School of Mathematical Sciences,
University of Tokyo, Tokyo
153-8914, Japan. \endgraf e-mail: \texttt{sakai@ms.u-tokyo.ac.jp}}}

\begin{document}

\maketitle
\newtheorem{thm}{Theorem}[section]
\newtheorem{prop}[thm]{Proposition}
\def\pf{\noindent{\it Proof.\quad}}
\def\qed{\hfil\fbox{}\medskip}
\theoremstyle{remark}
\newtheorem{rem}{\bf {\slshape Remark}}[section]
\numberwithin{equation}{section}

\begin{abstract}

The sixth Painlev\'e equation is a basic equation among the non-linear differential equations with three fixed singularities, corresponding to Gauss's hypergeometric differential equation among the linear differential equations.
%The sixth Painlev\'e equation is a basic equation among the non-linear differential equations with three fixed singularities, corresponding to Gauss's hypergeometric differential equation among the linear differential equation.
It is known that 2nd order Fuchsian differential equations with three singular points are reduced to the hypergeometric differential equations.
Similarly, for nonlinear differential equations, we would like to determine the equation from the local behavior around the three singularities.
In this paper, the sixth Painlev\'e equation is derived by imposing the condition that it is of type (H) at each three singular points for the homogeneous quadratic 4th-order differential equation.

 {\it Keywords.} integrable system, Painlev\'e equations.

 {\it 2020 Mathematical Subject Classification Numbers.} 33E17, 34M55.

\end{abstract}

\section{Introduction}

The Painlev\'e equations are second-order nonlinear ordinary differential equations proposed in the motive of searching for new special functions following the success of research on special functions like elliptic functions and hypergeometric functions in the 19th century.
%The Painlev\'e equations are second-order nonlinear ordinary differential equations proposed in the motive of searching for the next new functions following the success of research on special functions like elliptic functions and hypergeometric functions in the 19th century.

Two perspectives are particularly important in the study of the Painlev\'e equations.
%Two perspectives were particularly important in the study of the Painlev\'e equations.
One is the classification of second-order algebraic differential equations with Painlev\'e properties by Painlev\'e and Gambier \cite{Pn1, Pn2, Gm}.
The other is R. Fuchs's deformation theory of linear differential equations that preserves monodromy.
%The other was R. Fuchs's deformation theory of linear differential equations that preserves monodromy.
In particular, the sixth Painlev\'{e} equation is obtained from the deformation theory of the 2nd-order Fuchsian equation with 4 singular points \cite{F}.
%In particular, the sixth equation is obtained from the deformation theory of the 2nd-order Fuchsian equation with 4 singular points \cite{F}.

The sixth Painlev\'e equation dealt with in this paper is the most generic one of the Painlev\'e equations.
%The sixth Painlev\'e equation dealt with in this paper is the most generic of the Painlev\'e equations.
It is known that all the other Painlev\'e equations can be obtained by degenerating the sixth equation.

The sixth Painlev\'e equation has a slightly complicated form and is written as
%The sixth Painlev\'e equation has a slightly complicated form and can be written as
\begin{align} \label{eqn:p6}
 \frac{d^2q}{dt^2}=&\ \frac{1}{2} \left( \frac{1}{q} +\frac{1}{q-1}+\frac{1}{q-t} \right) \left( \frac{dq}{dt} \right)^2- \left( \frac{1}{t}+ \frac{1}{t-1}+ \frac{1}{q-t} \right) \frac{dq}{dt} \\
&+\frac{q(q-1)(q-t)}{t^2(t-1)^2} \left( a +\frac{b t}{q^2}+\frac{c(t-1)}{(q-1)^2} +\frac{d t(t-1)}{(q-t)^2}\right)\nonumber .
% &+\frac{q(q-1)(q-t)}{t^2(t-1)^2} \left( \alpha +\frac{\beta t}{q^2}+\frac{\gamma(t-1)}{(q-1)^2} +\frac{\delta t(t-1)}{(q-t)^2}\right)\nonumber .
\end{align}
Here $a$, $b$, $c$, $d$ are parameters.
%Here $\alpha$, $\beta$, $\gamma$, $\delta$ are parameters.
However,
it takes time and effort to derive the concrete expression of this equation
starting from the Painlev\'{e} property or the isomonodromic deformation theory.
%However,
%it takes time and effort to derive the concrete expression of this equation
%starting from these characterizations.
The classification of differential equations that satisfy the Painlev\'e property is very complicated.
In Gambier's paper, the argument is to eliminate equations that do not satisfy the Painlev\'e property, so the sixth Painlev\'e equation cannot be derived without a full classification.
%The classification of differential equations that satisfy the Painlev\'e property is very complicated, and it is necessary to read through Gambier's paper to reach this concrete expression.
The isomonodromic deformation of the four-point Fuchsian equation is clear, but it still needs long calculation.

What we would like to do in this paper is to determine simply the concrete expression of the equation from a local condition around the singularity $t = 0,1, \infty$.
%What we would like to do in this paper is to determine the concrete expression of the equation from the local condition around the singularity $t = 0,1, \infty$.
However, in detail, what we characterize is not the 2nd-order equation of normal type \eqref{eqn:p6}, but an equivalent 4th-order equation called a bilinear equation.
%However, in detail, what we characterize is not the 2nd-order equation of normal type \eqref{eqn:p6}, but an equivalent equation called a bilinear equation.
Just as hypergeometric differential equations can be characterized as second-order linear equations with three regular singularities, we would like to characterize the sixth Painlev\'{e} equation as a fourth-order homogeneous quadratic equation with three singularities of good nature.
Here, ``homogeneous quadratic'' means that the equation does not contain terms other than quadratic terms with respect to the unknown function $f$ and its derivatives.
%Here, homogeneous quadratic means that the equation does not contain terms other than quadratic terms with respect to the unknown function $f$ and its derivatives.

Our results can be stated as follows: A homogeneous quadratic 4th-order equations of type (H) with 3 singular points is determined as%Stating the results in advance, our results can be written as follows: A homogeneous quadratic 4th-order equations of type (H) with 3 singular points is determined as
\begin{align}
 0 =&\left[ (t-1)^4{{\cal D}_{\log t}}^4
 +2(t-1)^3(t+1)\delta {{\cal D}_{\log t}}^2+t(t-1)^2\delta^2
 \right.\\
 &+(t-1)^2(t(t-1)+1-\beta_1-\beta_5t-\beta_6t^2)
 {{\cal D}_{\log t}}^2\nonumber\\
 &\left. +t(t-1)(\beta_4(t-1)-\beta_1-\beta_5-\beta_6)\delta
 +t(t-1)(\beta_2t+\beta_3)
 \right] f\cdot f,\nonumber
\end{align}
% an equation with parameters $\beta_1, \ldots, \beta_6$, and
%Proactively stating the results, our results can be stated as follows: A homogeneous quadratic 4th-order equations of type (H) with 3 singular points is determined as an equation with parameters $\beta_1, \ldots, \beta_6$, and
and gauge transformations reduce it to the case that $\beta_5 = \beta_6 = 0$.
Furthermore, when $\beta_4 =-\frac12\beta_5- \beta_6$, it is reduced to a second-order differential equation, which coincides with the bilinear form of the sixth Painlev\'e equation. The differential operator $\delta=t\frac{d}{dt}$, and ${\cal D}_{\log t}$ is the Hirota derivative for $\delta$; see (\ref{Hirota}).

The definition of type (H) is given at the end of Section 3 and the beginning of Section 4. We say that a homogeneous quadratic differential equation is of type (H) at the singular point $t=0$, when the lowest term for the independent variable $t$ of the equation is
a constant multiple of
\begin{align}
&2t^4\left\{ f\frac{d^4f}{dt^4}-4\frac{df}{dt}\frac{d^3f}{dt^3}+3\left( \frac{d^2f}{dt^2} \right)^2 \right\}+8t^3\left( f\frac{d^3f}{dt^3} -\frac{df}{dt}\frac{d^2f}{dt^2} \right)+4t^2f\frac{d^2f}{dt^2} \label{eq+} \\
%&2t^4\left\{ f\frac{d^4f}{dt^4}-4\frac{df}{dt}\frac{d^3f}{dt^3}+3\left( \frac{d^2f}{dt^2} \right)^2 \right\}+8t^3\left( f\frac{d^3t}{dt^3} -\frac{df}{dt}\frac{d^2f}{dt^2} \right)+4t^2f\frac{d^2f}{dt^2} \label{eq+} \\
&+8\alpha \left\{ t^2\left( f \frac{d^2f}{dt^2} -\left(\frac{df}{dt}\right)^2 \right) +tf\frac{df}{dt} \right\}. \nonumber
\end{align}
%\begin{align}
% \left[\mathcal{D}_{\log t}^4+(1+4\alpha -2\delta)\mathcal{D}^2_{\log t} \right]\,
% f\cdot f
%\end{align}
with a constant $\alpha$.
The definition of type (H) is based on the existence and the convergence of formal series solutions at the singular point.

In Section 4, we look at the explicit form of the equation whose all 3 fixed singular points $t=0,1,\infty$ are of type (H)
%In Section 4, we prove the explicit form of the equation whose all 3 fixed singular points $t=0,1,\infty$ are of type (H)
%for an appropriate constant $\alpha$. In Section 4, we prove the explicit form of the equation whose all 3 fixed singular points $t=0,1,\infty$ are of type (H)
%In Theorem 4.1, we obtain the explicit form of the equation whose all 3 fixed singular points $t=0,1,\infty$ are of type (H).

%\begin{center}
% $\star$\quad $\star$\quad $\star$
%\end{center}
%\medskip

\section{Bilinear equation of the sixth Painlev\'{e} transcendents}

First, let us review the bilinear form of the sixth Painlev\'e equation (cf. \cite{O, JM}).
The sixth Painlev\'e equation can be expressed as a non-autonomous Hamilton system using the following Hamiltonian:
\begin{align}
 t(t-1)H\left({\theta_0, \theta_1, \theta_t\atop
 \kappa_1,~\kappa_2
 };t;q,p\right)
 =&\ q(q-1)(q-s)p^2\\
 &+\left\{\frac{\theta_t}{q}+\frac{\kappa_1-\kappa_2-1}{q-1}+\frac{\theta_0+1}{q-t}\right\}
 q(q-1)(q-t)p\nonumber\\
 &+\kappa_2(\theta_1+\kappa_2)q-t(\theta_t+\kappa_1)\theta_t
 -\theta_0\theta_t.\nonumber
\end{align}
Here
$\theta_0$, $\theta_1$, $\theta_t$,
$\kappa_1$, $\kappa_2$
are parameters satisfying
$\theta_0+\theta_1+\theta_t+\kappa_1+\kappa_2=0$.
The correspondence with the expression (\ref{eqn:p6}) is as follows:
\begin{align}
 &a =\frac{{\theta_1}^2}{2},\quad
 b =-\frac{{\theta_t}^2}{2},\quad
 c =\frac{(\kappa_1-\kappa_2-1)^2}{2},\quad
 d =\frac{1-{\theta_0}^2}{2}.
% &\alpha =\frac{{\theta_1}^2}{2},\quad
% \beta =-\frac{{\theta_t}^2}{2},\quad
% \gamma =\frac{(\kappa_1-\kappa_2-1)^2}{2},\quad
% \delta =\frac{1-{\theta_0}^2}{2}.
\end{align}

The system can be solved by finding $q$, $p$ as a function of $t$,
but instead, we may find the Hamiltonian function $h(t)$ substituting $q(t) $, $p(t)$,
because the functions $q$ and $p$ are written in rational expressions
of $h$ and its derivatives.
%but instead, you may find the Hamiltonian function of substituting $q(t) $, $p(t)$.
In fact,
when we put
\begin{align}
 h(t)&=t(t-1)H\left(q(t),p(t)\right)-\kappa_2 (\theta_1 +\kappa_2)t
 +t(\theta_t+\kappa_1)\theta_t+\theta_0\theta_t\\
 &=q(q-1)(q-t)p^2
 +\left\{\frac{\theta_t}{q}+\frac{\kappa_1-\kappa_2-1}{q-1}+\frac{\theta_0+1}{q-t}\right\}
 q(q-1)(q-t)p%\nonumber\\
 %+\{(\theta_0+1) q(q-1)+(\kappa_1-\kappa_2-1) q(q-t)+\theta_t (q-1)(q-t)\} p
 \nonumber\\
 &\hspace*{3em}+\kappa_2 (\theta_1 +\kappa_2)(q-t),\nonumber
\end{align}
we get
\begin{align}
 p=&\ \frac{h+(q-t)\frac{dh}{dt}}{(\theta_0+1)q(q-1)},\\
 q=&-\frac12\frac{N}{D},\\
 &N=t(t-1)\frac{d^2h}{dt^2}
 -\left(t\frac{dh}{dt}-h\right)
 \left(\frac{2}{\theta_0+1}\frac{dh}{dt}+\theta_t+\kappa_1-\kappa_2-1\right)\nonumber\\
 &\hspace*{3em}
 -(\theta_0+\theta_t+1)\frac{dh}{dt}-(\theta_0+1)\kappa_2(\theta_1+\kappa_2),\nonumber\\
 &D=\frac{dh}{dt}\left(\frac{1}{\theta_0+1}\frac{dh}{dt}+\theta_0+\theta_t+\kappa_1-\kappa_2\right)
 +(\theta_0+1)\kappa_2(\theta_1+\kappa_2)\nonumber.
\end{align}
%although we need a little calculation.
%The function $p$ and $q$ are written in a rational expression
%of $h$ and its derivatives.

Now, we know that the Hamiltonian function $h$ satisfies the second-order equation:
%Now, we find that the Hamiltonian function $h$ satisfies the second-order equation:
%Now, we find that the Hamilton function $h$ satisfies the second-order equation:
\begin{align}
 &\left(t(t-1)\frac{d^2h}{dt^2}\right)^2=-\left(4\frac{dh}{dt}+4\kappa_2
 (\theta_1+\kappa_2)-(\theta_t +\kappa_1-\kappa_2-1)^2\right)
 \left(t\frac{dh}{dt}-h\right)^2\\
 &+4\left(\!\left(\frac{dh}{dt}\right)^2
 +\frac{(\theta_0-\theta_t+1)(\kappa_1-\kappa_2-1)
 -\theta_t(\theta_0+\theta_t+1)+2\kappa_2(\theta_1+\kappa_2)}{2}
 \frac{dh}{dt}\right.\nonumber\\
 &\qquad\left. +\frac{(\theta_0+1)\kappa_2(\theta_1+\kappa_2)
 \left(\kappa_1-\kappa_2-\theta_t -1\right)}{2} \right)
 \left(t\frac{dh}{dt}-h\right)\nonumber \\
 &+\left((\theta_0 +\theta_t+1)\frac{dh}{dt}+(\theta_0+1)\kappa_2 (\theta_1+\kappa_2)\right)^2.\nonumber
\end{align}
Once the solution $h$ of this equation is found, then $q$ and $p$ are obtained, and the sixth Painlev\'e equation is solved.
%Once the solution $h$ of this equation is found, then $q$ and $p$ can be calculated, and the Painlev\'e equation can be solved.

Now, let us consider rewriting this equation into simpler one with the Hirota derivative.
%Now, let us consider rewriting this equation into easy one.
Differentiation again gives the 3rd-order differential equation
\begin{align}
 0=&t^2(t-1)^2\frac{d^3h}{dt^3}+(2t-1)t(t-1)\frac{d^2h}{dt^2}
 +6t(t-1)\left(\frac{dh}{dt}\right)^2
 -4(2t-1)h\frac{dh}{dt}+2h^2\\
 &-(\beta_1+t\beta_5+t^2\beta_6)\frac{dh}{dt}
 +(\beta_4+\beta_5+(t+1)\beta_6)h+\frac12 (\beta_2t+\beta_3).\nonumber
\end{align}
Here
\begin{align}
  &\beta_1=(\theta_0+\theta_1+1)^2,\quad
 \beta_2=2(\theta_0+1)\kappa_2(\theta_1+\kappa_2)(\kappa_2-\kappa_1+\theta_t+1),
 \\
 &\beta_3=-2(\theta_0+\theta_t+1)(\theta_0+1)\kappa_2(\theta_1+\kappa_2),\qquad
 \beta_6=(\theta_t+\kappa_1-\kappa_2-1)^2-4\kappa_2(\theta_1+\kappa_2),\nonumber\\
 &\beta_5/2=(\theta_0+\theta_t+1)(\theta_t+\kappa_1-\kappa_2-1)
 +\kappa_2(\theta_1+\kappa_2),\qquad
 \beta_4=-\frac12\beta_5-\beta_6.\nonumber
\end{align}
Furthermore, putting
$h=t(t-1)\frac{d}{dt}\log f$,
the equation is rewritten as
\begin{align}\label{eqn:p6_bilin}
 0=&\
 \left[ (t-1)^4{{\cal D}_{\log t}}^4
 +2(t-1)^3(t+1)\delta {{\cal D}_{\log t}}^2+t(t-1)^2\delta^2
 \right.\\
 &+(t-1)^2(t(t-1)+1-\beta_1-\beta_5t-\beta_6t^2)
 {{\cal D}_{\log t}}^2\nonumber\\
 &\left. +t(t-1)(\beta_4(t-1)-\beta_1-\beta_5-\beta_6)\delta
 +t(t-1)(\beta_2t+\beta_3)
 \right] f\cdot f. \nonumber
\end{align}
The differential operator $\delta = t\frac{d}{dt}$ is used here.
$\mathcal{D}_{\log t}$ is the Hirota derivative for $\delta$
and we define it as
\begin{equation}
\mathcal{D}^N_{\log t} f\cdot g:=\sum_{i=0}^N (-1)^i \binom{N}{i} \left( \delta^{N-i}f\right) \left( \delta^ig \right). \label{Hirota}
%\mathcal{D}^N_{\log t} f\cdot g:=\sum_{i=0}^N (-1)^i \binom{N}{i} \delta^{N-i}f\delta^ig. \label{Hirota}
\end{equation}
The Hirota derivative is an operator that gives one function from a pair of functions.

The equation (\ref{eqn:p6_bilin}) is called the bilinear form of the sixth Painlev\'e equation.
Since it is written by using a bilinear operator, it is often called a bilinear form similarly to the case of soliton equations.
%Since it is written by using a bilinear operator, it is often called a bilinear form together with similar soliton equations.
However, in this case of the Painlev\'e equation, it is actually a homogeneous quadratic relation rather than a bilinear form.
Note that the quadratic equation of the sixth Painlev\'e equation satisfies $2\beta_4 + \beta_5 + 2\beta_6 = 0$, which is the condition that the equation is reduced to an equation of second order. (See the end of the section 4.)
%Note that this equation satisfies $2\beta_4 + \beta_5 + 2\beta_6 = 0$.

The solution of the equation (\ref{eqn:p6_bilin}) is called the sixth Painlev\'e $\tau$-function.
%The solution of the equation (\ref{eqn:p6_bilin}) is called the Painlev\'e $\tau$-function.

\section{Series solution}

We have seen that the $\tau$-function of the sixth Painlev\'{e} equation satisfies the homogeneous quadratic differential equation (\ref{eqn:p6_bilin}) with $2\beta_4 + \beta_5 + 2\beta_6 = 0$.
%We have seen that the $\tau$-function of the sixth Painlev\'{e} equation satisfies the homogeneous quadratic differential equation.
A series solution of the equation is also known \cite{GIL, ILT, BS, GL}.
%A series solution is also known \cite{GIL, ILT, BS, GL}.
The solution is written in the form
\begin{equation}
\tau (t)=\sum_{k\in \mathbb{Z},l\in \mathbb{Z}_{\ge 0}} b_{k,l}(\sigma) t^{(\sigma +k)^2+l}. \label{eqe}
%\tau (t)=\sum_{m\in \mathbb{Z},n\in \mathbb{Z}_{\ge 0}} b_{m,n}(\sigma) t^{(\sigma +m)^2+n}. \label{eqe}
\end{equation}
Here, $\sigma$ is a characteristic parameter that appears in powers of the series, and it is an arbitrary complex parameter independent of the equation's parameters $a$, $b$, $c$, $d$. The parameter $\sigma$ corresponds to initial values.
Also, $b_{k,l}(\sigma)$'s are coefficients of the series and do not depend on the variable $t$.

This series is also a special case of a series of the form
\begin{equation}
f(t)=t^{\sigma^2}\sum_{\substack{(m,n)\in {\mathbb{Z}_{\ge 0}}^2 \\ \text{or} \ (m,n)\in \left( \frac{1}{2} +\mathbb{Z}_{\ge 0} \right)^2}} a_{m,n}(\sigma)t^{(2\sigma +1)m+(-2\sigma +1)n}\label{eqa}
\end{equation}
Recently, one of the authors has shown that the following two theorems hold for a homogeneous quadratic differential equation with a series solution of this form \cite{H}:

\begin{thm}
Let $N_0$ be a positive integer.
A homogeneous quadratic differential equation
 \begin{align}
&\sum_{0 \le K_1 \le K_2 \le 4}\alpha_{0,K_1,K_2}t^{K_1+K_2} \frac{d^{K_1}f}{dt^{K_1}}\frac{d^{K_2}f}{dt^{K_2}}\\
&+\sum_{N=1}^{N_0} t^N \sum_{0 \le K_1 \le K_2< \infty}\alpha_{N,K_1,K_2}t^{K_1+K_2} \frac{d^{K_1}f}{dt^{K_1}}\frac{d^{K_2}f}{dt^{K_2}}
=0 \label{eqb}\nonumber
\end{align}
has a general series solution of the form \eqref{eqa}
which has arbitrary parameters $\sigma \in \mathbb{C}\setminus\mathbb{Q}$
and $a_{0,0},a_{1,0},a_{0,1}\in\mathbb{C}\setminus\{0\}$,
if and only if
the sum of the lowest-degree terms
$\displaystyle\sum_{0 \le K_1 \le K_2 \le
 4}\!\!\!\!\alpha_{0,K_1,K_2}t^{K_1+K_2}
 \frac{d^{K_1}f}{dt^{K_1}}\frac{d^{K_2}f}{dt^{K_2}}$
is a constant multiple of
\begin{align}
 \left[\mathcal{D}_{\log t}^4+(1-2\delta)\mathcal{D}^2_{\log t} \right]\, f\cdot f.
\end{align}
Here, $\delta = t \frac{d}{dt}$, and the degree is with respect to $t$ \rm{(}$\mathrm{deg} \, t =1$\rm{)} while the degree of $d / dt$ is counted as $-1$.
%Here, the degree is with respect to $t$, and the degree of $d / dt$ is counted as -1.
\end{thm}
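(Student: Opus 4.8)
\medskip
\noindent
The plan is to substitute the Ansatz \eqref{eqa} into \eqref{eqb} and follow the recursion it imposes on the coefficients. First I would reindex: setting $k=m-n$, $l=m+n$, the union of the two lattices in \eqref{eqa} becomes $S=\{(k,l):k\in\mathbb{Z},\ l\in\mathbb{Z}_{\ge0},\ l\ge|k|\}$, which is closed under addition and under $l\mapsto l+1$, and \eqref{eqa} reads
\[
 f(t)=t^{\sigma^2}\sum_{(k,l)\in S}c_{k,l}\,t^{2\sigma k+l},\qquad c_{0,0}=a_{0,0},\ \ c_{1,1}=a_{1,0},\ \ c_{-1,1}=a_{0,1}.
\]
For $N\ge0$ let $P_N(\lambda,\mu)$ be the symmetric polynomial such that the degree-$N$ layer of \eqref{eqb} sends $t^{\lambda}\!\cdot\! t^{\mu}$ to $P_N(\lambda,\mu)\,t^{\lambda+\mu+N}$; then $P_0$ has degree at most $4$ in each of $\lambda,\mu$ (since the degree-$0$ layer has order $\le4$), and $[\mathcal{D}_{\log t}^4+(1-2\delta)\mathcal{D}_{\log t}^2]f\cdot f$ has symbol $(\lambda-\mu)^2\bigl[(\lambda-\mu)^2-2(\lambda+\mu)+1\bigr]$. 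Collecting the coefficient of $t^{2\sigma^2+2\sigma K+L}$ then gives, for each $(K,L)\in S$,
\[
 \sum_{N\ge0}\ \sum_{\substack{(k,l),(k',l')\in S\\ k+k'=K,\ l+l'=L-N}}c_{k,l}c_{k',l'}\,P_N\bigl(e(k,l),e(k',l')\bigr)=0,\qquad e(k,l):=\sigma^2+2\sigma k+l ,
\]
a finite sum. Because $l\ge|k|$ on $S$, the unknown $c_{K,L}$ enters this identity only through the contribution of $(k,l)=(K,L)$, $(k',l')=(0,0)$, $N=0$, with total coefficient $2c_{0,0}P_0(e(K,L),\sigma^2)$; all remaining terms involve only $c_{k,l}$ with $l<L$.

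\medskip
For the ``only if'' direction I would read off the orders $(K,L)=(0,0)$ and $(\pm1,1)$, where the identity collapses to $c_{0,0}^2P_0(\sigma^2,\sigma^2)=0$ and $2c_{0,0}c_{\pm1,1}P_0\bigl(\sigma^2,(\sigma\pm1)^2\bigr)=0$ with no further terms. Since $\sigma$ and the nonzero constants $c_{0,0},c_{\pm1,1}$ are arbitrary, we get $P_0(\lambda,\lambda)\equiv0$ and $P_0\bigl(\sigma^2,(\sigma\pm1)^2\bigr)\equiv0$ as polynomials. From the first, being symmetric, $P_0=(\lambda-\mu)^2R(\lambda,\mu)$ with $R$ symmetric of degree at most $2$ in each variable. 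The map $\sigma\mapsto(\sigma^2,(\sigma+1)^2)$ parametrizes the irreducible conic $Q:=(\lambda-\mu)^2-2(\lambda+\mu)+1=0$, so $P_0$ vanishes on it; as $(\lambda-\mu)^2$ does not vanish identically on this conic, $R$ must, hence $Q\mid R$, and comparing degrees forces $R=cQ$ for a constant $c$. Therefore $P_0=c(\lambda-\mu)^2\bigl[(\lambda-\mu)^2-2(\lambda+\mu)+1\bigr]$, i.e.\ $c$ times the symbol of $[\mathcal{D}_{\log t}^4+(1-2\delta)\mathcal{D}_{\log t}^2]f\cdot f$; since the symbol determines the degree-$0$ layer, the lowest-degree part of \eqref{eqb} is a constant multiple of that expression.

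\medskip
For the ``if'' direction I would start from $P_0=c(\lambda-\mu)^2\bigl[(\lambda-\mu)^2-2(\lambda+\mu)+1\bigr]$ with $c\neq0$, compute $P_0\bigl(e(K,L),\sigma^2\bigr)=c\,(2\sigma K+L)^2\bigl[(2\sigma K+L-1)^2-4\sigma^2\bigr]$, and observe that for $\sigma\notin\mathbb{Q}$ this vanishes exactly when $(K,L)\in\{(0,0),(1,1),(-1,1)\}$ (the first factor only at $(0,0)$; $(2\sigma K+L-1)^2=4\sigma^2$ forces $K=\pm1$, $L=1$). Hence, prescribing $c_{0,0},c_{1,1},c_{-1,1}$ as arbitrary nonzero constants, the identities at orders $(0,0),(1,1),(-1,1)$ hold automatically ($0=0$), while at every other $(K,L)\in S$ the identity determines $c_{K,L}$ uniquely from the $c_{k,l}$ with $l<L$; this recursion is well founded because $L$ strictly decreases, and $S$ is closed, so every power appearing is covered. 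This yields a formal solution of the shape \eqref{eqa} with precisely the four free parameters, and the computation shows this is the only way such a solution can occur. If genuine convergence of \eqref{eqa} is demanded, it follows by a standard majorant estimate, using that $|P_0(e(K,L),\sigma^2)|$ grows like $L^4$ while \eqref{eqb} has only finitely many higher-degree layers.

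\medskip
The step I expect to be the main obstacle is the algebraic heart of the ``only if'' direction: recognizing that the three lowest-order identities say precisely that $P_0$ vanishes on the diagonal $\lambda=\mu$ and on the conic $Q=0$, and then exploiting the bound $\deg_\lambda P_0\le4$ (which is exactly where ``fourth order'' is used) to exclude $R$ being $Q$ times a nonconstant symmetric factor. The remainder is careful bookkeeping — checking the reindexing and closedness of $S$, verifying that it is exactly $a_{0,0},a_{1,0},a_{0,1}$ that escape determination, and confirming the triangular structure of the recursion — together with the routine (if somewhat technical) convergence estimate should that be within the scope of the statement.
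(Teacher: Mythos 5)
The paper itself does not prove this theorem: it is imported from the first author's preprint \cite{H}, so there is no in-paper argument to measure yours against. Judged on its own terms, your proposal is essentially correct as a proof of the formal-series statement and follows what is surely the intended route: reindexing the exponent lattice as $S=\{(k,l): k\in\mathbb{Z},\ l\ge |k|\}$ (your identification of the union of the integer and half-integer $(m,n)$-lattices with $S$ is right), noting that for $\sigma\notin\mathbb{Q}$ distinct $(K,L)$ give distinct exponents and that each coefficient identity is a finite sum, exploiting the triangular structure (only the pairing with $(0,0)$ involves a level-$L$ coefficient), reading off the three indicial identities at $(0,0),(\pm1,1)$, and pinning down the symmetrized symbol $P_0=c(\lambda-\mu)^2[(\lambda-\mu)^2-2(\lambda+\mu)+1]$ from symmetry, vanishing on the diagonal and on the irreducible conic, and the bidegree bound $\le(4,4)$; conversely, for $\sigma\notin\mathbb{Q}$ the factor $c(2\sigma K+L)^2[(2\sigma K+L-1)^2-4\sigma^2]$ vanishes only at those three lattice points, so the recursion determines all other coefficients.

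Two points should be tightened. First, the step ``the symbol determines the degree-$0$ layer'' needs a one-line justification: the symmetrized products $[\lambda]_{K_1}[\mu]_{K_2}+[\lambda]_{K_2}[\mu]_{K_1}$ with $0\le K_1\le K_2\le 4$ are linearly independent, so equality of symbols gives equality of the differential expressions (and the constant must be nonzero for the ``if'' direction, which is clearly the intended reading). Second, your closing aside that convergence ``follows by a standard majorant estimate'' with $|P_0|$ growing like $L^4$ is not justified in the generality of the equation in this theorem: its higher-degree layers may contain derivatives of arbitrarily high order, so the polynomials $P_N$ can grow faster than $L^4$ and a naive majorant argument breaks down. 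This is precisely why the paper's convergence result (the second theorem of Section 3) is stated only for equations whose higher layers have order at most four, and why the present theorem should be read as a statement about formal series; the aside is unnecessary, but it should not be advertised as routine.
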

In particular, 3rd-order (or lower order) homogeneous quadratic differential equations do not have
the series solution of the form (\ref{eqa}).
%In particular, 3rd-order (or lower order) homogeneous quadratic differential equations do not have
%the above-mentioned solution.

\begin{thm}
 Let $N_0$ be a positive integer.
If a 4th-order homogeneous quadratic differential equation
\begin{equation}
\left[\mathcal{D}_{\log t}^4+(1-2\delta)\mathcal{D}^2_{\log t} \right]\, f\cdot f +\sum_{N=1}^{N_0} t^N \sum_{K=0}^{4} \sum_{I=0}^{\left[ \frac{1}{2} K \right]}\alpha_{N,K,I}\delta^I f \delta^{K-I}f=0 \label{eq1}
\end{equation}
has a formal series solution
\begin{equation}
f(t)=t^{\sigma^2}\sum_{\substack{(m,n)\in {\mathbb{Z}_{\ge 0}}^2 \\ \text{or} \ (m,n)\in \left( \frac{1}{2} +\mathbb{Z}_{\ge 0} \right)^2}} a_{m,n}t^{(2\sigma +1)m+(-2\sigma +1)n}\label{eq2}
\end{equation}
with $\sigma \notin \mathbb{Q},\left| \mathrm{Re} \ \sigma \right| <\frac{1}{2}$,
then the series has a convergence region of positive radius in any angular region of $\mathbb{C} \setminus \{0 \}$.
%then the series has a convergence region in $\mathbb{C} \setminus \{0 \}$ that includes the neighborhood of $0$.
\end{thm}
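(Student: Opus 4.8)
The plan is to convert the problem into a convergence statement for an ordinary \emph{double} power series and then run a method-of-majorants argument; the hypotheses $\sigma\notin\mathbb{Q}$ and $|\mathrm{Re}\,\sigma|<\tfrac12$ will enter only through a lower bound for the ``indicial'' denominators of the coefficient recursion. First I would set $x=t^{2\sigma+1}$, $y=t^{-2\sigma+1}$ and rewrite the given solution \eqref{eq2} as
\[
 f(t)=t^{\sigma^2}\bigl(F(x,y)+t\,G(x,y)\bigr),\qquad
 F(x,y)=\sum_{m,n\ge0}a_{m,n}x^m y^n,\quad
 G(x,y)=\sum_{m,n\ge0}a_{m+\frac12,n+\frac12}x^m y^n,
\]
which is legitimate since $t^2=xy$. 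Because $\mathrm{Re}(2\sigma+1)=1+2\,\mathrm{Re}\,\sigma>0$ and $\mathrm{Re}(-2\sigma+1)=1-2\,\mathrm{Re}\,\sigma>0$ exactly when $|\mathrm{Re}\,\sigma|<\tfrac12$, on any angular region $S=\{0<|t|<r,\ \alpha<\arg t<\beta\}$ the argument $\arg t$ is bounded, so $|x|\le C_S|t|^{1+2\mathrm{Re}\sigma}\to0$ and $|y|\le C_S|t|^{1-2\mathrm{Re}\sigma}\to0$ as $t\to0$, with $|(xy)^{1/2}|=|t|$. Hence it suffices to show that the formal series $F$ and $G$ converge on a common polydisc $\{|x|<\rho,\ |y|<\rho\}$; the statement of the theorem then follows by taking $|t|$ small on $S$.

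Next I would extract the recursion. Substituting the ansatz into \eqref{eq1} and collecting the coefficient of $t^{2\sigma^2+\lambda}$ with $\lambda=(2\sigma+1)m+(-2\sigma+1)n$ (which, since $\sigma\notin\mathbb{Q}$, is attained by a single pair $(m,n)$ across both parity classes) gives, for each $(m,n)$, one scalar equation. Using $\mathcal D_{\log t}^{N}(t^{\rho'}\cdot t^{\rho''})=(\rho'-\rho'')^{N}t^{\rho'+\rho''}$ and $\delta\,t^{\rho}=\rho\,t^{\rho}$, the leading operator $[\mathcal D_{\log t}^4+(1-2\delta)\mathcal D_{\log t}^2]\,f\cdot f$ produces the ``diagonal'' term $D_{m,n}\,a_{m,n}$ with
\[
 D_{m,n}=2a_{0,0}\,\lambda^2\bigl((\lambda-1)^2-4\sigma^2\bigr)
 =2a_{0,0}\,\lambda^2(\lambda-1-2\sigma)(\lambda-1+2\sigma),
\]
together with bilinear terms $a_{m',n'}a_{m'',n''}$ indexed by $(m',n')+(m'',n'')=(m,n)$ with both summands nonzero, hence of strictly smaller degree $m'+n',\,m''+n''<m+n$; the perturbation $\sum_{N=1}^{N_0}t^N(\cdots)$ adds only bilinear terms of total degree $m+n-N\le m+n-1$. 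Thus $D_{m,n}$ vanishes precisely at $(0,0),(1,0),(0,1)$ --- consistently with Theorem~3.1, where $a_{0,0},a_{1,0},a_{0,1}$ are the free data --- and for every other $(m,n)$ the equation determines $a_{m,n}$ from coefficients of strictly smaller degree, after division by $D_{m,n}$.

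The quantitative core is a lower bound for $|D_{m,n}|$ matched to the polynomial growth of the numerators. Since $m,n\ge0$ we have $|m-n|\le m+n=:\mu$, so $\mathrm{Re}\,\lambda=\mu+2(\mathrm{Re}\,\sigma)(m-n)\ge(1-2|\mathrm{Re}\,\sigma|)\mu=:c_0\mu$ with $c_0>0$, and likewise $\mathrm{Re}(\lambda-1\mp2\sigma)\ge c_0\mu-C_1$; hence there is $\mu_0$ with $|D_{m,n}|\ge c(\sigma)\,\mu^4$ for $\mu\ge\mu_0$, while for the finitely many pairs with $\mu<\mu_0$ the condition $\sigma\notin\mathbb{Q}$ makes $D_{m,n}\ne0$ (off the three exceptional pairs), so $|D_{m,n}|\ge c'(\sigma)>0$ there. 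On the other hand, in the recursion for $a_{m,n}$ each bilinear term carries a polynomial factor in $\lambda',\lambda''$ of degree at most $4$ (degree $4$ from $\mathcal D_{\log t}^4$, degree at most $K\le4$ from the perturbation), and $|\lambda'|,|\lambda''|\le(1+2|\sigma|)\mu$, so after division by $D_{m,n}$ the $\mu$-powers are absorbed and one obtains $|a_{m,n}|\le C(\sigma)\sum|a_{m',n'}||a_{m'',n''}|$ (sum over all leading and perturbation bilinear terms) for every $(m,n)$ outside the three exceptional pairs. A standard majorant induction then finishes: with the factorized ansatz $|a_{m,n}|\le A\,B^{m+n}/\bigl((m+1)^2(n+1)^2\bigr)$, the Cauchy-product estimate $\sum_{j=0}^{k}(j+1)^{-2}(k-j+1)^{-2}\le C_{*}(k+1)^{-2}$ controls the (up to $O(\mu^2)$) splittings, so one first fixes $A$ small so that the inductive step closes, then fixes $B$ large so that the ansatz dominates the finitely many seed values $a_{m,n}$ ($\mu\le\mu_0$) and absorbs the factor $N_0$ coming from the perturbation; this yields $|a_{m,n}|\le A\,B^{m+n}/\bigl((m+1)^2(n+1)^2\bigr)$, hence convergence of $F$ and $G$ on $\{|x|,|y|<1/B\}$.

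The main obstacle is exactly this balance: one must check that the fourth-power growth of the numerators produced by $\mathcal D_{\log t}^4$ (and by the up-to-fourth-order perturbation) is compensated by $|D_{m,n}|\gtrsim\mu^4$, and that bound is available only because $|\mathrm{Re}\,\sigma|<\tfrac12$ keeps $\lambda$ and $\lambda-1\pm2\sigma$ of size $\gtrsim\mu$ --- so there are no small denominators, even when $\sigma$ is real and irrational --- while $\sigma\notin\mathbb{Q}$ is what makes the recursion solvable at the finitely many low-degree steps. The rest (the precise bookkeeping of the bilinear structure constants, the coupling of the two parity classes $F$ and $G$ through $f\cdot f=F^2+2t\,FG+t^2G^2$, and the choice of the majorant constants) is routine.
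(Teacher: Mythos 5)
The paper itself gives no proof of this theorem: it is quoted from the reference [H] (arXiv:2210.10450) with the words ``one of the authors has shown'', so there is no in-text argument to measure your proposal against, and the comparison can only be with the cited preprint rather than with anything in this paper. On its own terms your plan is sound and identifies the correct mechanism. The reduction to the double series $F(x,y)$, $G(x,y)$ with $x=t^{2\sigma+1}$, $y=t^{1-2\sigma}$ is exactly why the conclusion is phrased on angular regions (boundedness of $\arg t$ controls $|t^{\pm 2\mathrm{i}\,\mathrm{Im}\,\sigma}|$, and $|\mathrm{Re}\,\sigma|<\tfrac12$ makes both exponents have positive real part); your indicial factor is correct, since the diagonal contribution of the leading operator on the pair $(t^{\sigma^2+\lambda},t^{\sigma^2})$ is $2a_{0,0}\lambda^2\bigl((\lambda-1)^2-4\sigma^2\bigr)$, vanishing only at $(m,n)=(0,0),(1,0),(0,1)$ because $\sigma\notin\mathbb{Q}$ forces $m-n$ and $m+n$ to be pinned separately; and the decisive quantitative point is the one you name, namely that $|\mathrm{Re}\,\lambda|\ge(1-2|\mathrm{Re}\,\sigma|)(m+n)$ gives $|D_{m,n}|\gtrsim\mu^4$, which exactly compensates the degree-$4$ polynomial growth of the numerators (this is where the restriction to fourth-order terms in \eqref{eq1} is used), so there are no small denominators and a standard majorant induction with weights $B^{m+n}(m+1)^{-2}(n+1)^{-2}$ closes. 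Two small caveats: your argument needs the normalization $a_{0,0}\neq 0$ (explicit in Theorem 3.1 but only implicit in the statement here), since otherwise the diagonal term degenerates; and the bookkeeping for odd $N$ in the perturbation, which shifts indices by $(N/2,N/2)$ and exchanges the integer and half-integer classes, as well as the index shifts in the Cauchy-product estimate, should be written out, though these only affect constants depending on $N_0$. With those details supplied, the proposal is a complete and essentially self-contained proof along what is very likely the same majorant route as the cited reference.
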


Here, the terms $\left[\mathcal{D}_{\log t}^4+(1-2\delta)\mathcal{D}^2_{\log t} \right]\, f\cdot f$
are written as
\begin{equation}
2t^4\left\{ f\frac{d^4f}{dt^4}-4\frac{df}{dt}\frac{d^3f}{dt^3}+3\left( \frac{d^2f}{dt^2} \right)^2 \right\}+8t^3\left( f\frac{d^3t}{dt^3} -\frac{df}{dt}\frac{d^2f}{dt^2} \right)+4t^2f\frac{d^2f}{dt^2}. \label{eq+}
\end{equation}
We can loosen the existence condition of series solutions of the form (\ref{eqa}) to the existence condition of series solution of the form
\begin{align}
 t^{\alpha +\sigma^2}\sum_{\substack{(m,n)\in {\mathbb{Z}_{\ge 0}}^2 \\ \text{or} \ (m,n)\in \left( \frac{1}{2} +\mathbb{Z}_{\ge 0} \right)^2}} a_{m,n}(\sigma)t^{(2\sigma +1)m+(-2\sigma +1)n} \label{gauge}
\end{align}
by gauge transformation $f \mapsto t^\alpha f$ and equation
\begin{align}
 \left[\mathcal{D}_{\log t}^4+(1-2\delta)\mathcal{D}^2_{\log t} \right]\,
t^\alpha f\cdot t^\alpha f=t^{2\alpha}\left[\mathcal{D}_{\log t}^4+(1-4\alpha -2\delta)\mathcal{D}^2_{\log t} \right]\, f\cdot f.
\end{align}
%Because
%\begin{align}
% \left[\mathcal{D}_{\log t}^4+(1-2\delta)\mathcal{D}^2_{\log t} \right]\,
%t^\alpha f\cdot t^\alpha f=t^{2\alpha}\left[\mathcal{D}_{\log t}^4+(1-4\alpha -2\delta)\mathcal{D}^2_{\log t} \right]\, f\cdot f
%\end{align}
%holds,
%we can loosen the condition by using the gauge transformation.
%we can losen the condition by using the gauge transformation.

Let us say that a homogeneous quadratic differential equation is of type
(H) at the singular point $t=0$,
when the lowest term with respect to the independent variable $t$ of the equation is
%when the lowest term for the independent variable $t$ of the equation is
a constant multiple of
\begin{align}
 \left[\mathcal{D}_{\log t}^4+(1+4\alpha -2\delta)\mathcal{D}^2_{\log t} \right]\,
 f\cdot f
\end{align}
with a constant $\alpha$.
%for an appropriate constant $\alpha$.
In this case we have a series solution of the form (\ref{gauge})
%\begin{align}
% t^{\alpha +\sigma^2}\sum_{\substack{(m,n)\in {\mathbb{Z}_{\ge 0}}^2 \\ \text{or} \ (m,n)\in \left( \frac{1}{2} +\mathbb{Z}_{\ge 0} \right)^2}} a_{m,n}(\sigma)t^{(2\sigma +1)m+(-2\sigma +1)n}
%\end{align}
near the singular point $t=0$.
We define singularity of type (H) at $t=t_0$ by taking $s=t-t_0$ (when $t_0 = \infty$, $s=1/t$).

We can easily check out that the three fixed singular points of the sixth Painlev\'{e} equation of bilinear form are of type (H). Furthermore, ones of the two fixed singular points of the third and fifth Painlev\'{e} equations of bilinear form are of type (H).
For the first, second and fourth Painlev\'e equations, the singular points of the equations of bilinear form are not of type (H).

\section{Homogeneous quadratic differential equation of type (H) with 3 singular points}

We consider
\begin{quote}
($*$) \quad fourth-order homogeneous quadratic differential equations that have only terms $\alpha_{k, l} (t)\left( \frac{d^k}{dt^k}f \right) \left( \frac{d^l}{dt^l} f\right)$ for which $k + l \leq 4$ holds.
\end{quote}
%We consider the fourth-order homogeneous quadratic differential equation that has only terms $\alpha_{k, l} (t)\left( \frac{d^k}{dt^k}f \right) \left( \frac{d^l}{dt^l} f\right)$ for which $k + l \leq 4$ holds.
Of the equations ($*$), an equation whose all the fixed singular points are
%Of these equations, an equation whose all the fixed singular points are
of type (H) is called {\bf an equation of type (H)}.
When we say an equation of type (H), we assume the condition ($*$).

Here, the fixed singular point is the zero of the coefficient of the highest order term (in our case, the 4th-order term).
%Here, the fixed singular point is the zero of the coefficient of the highest order term (in this case, the 4th-order term).
We say that $t=\infty$ is a fixed singular point when $s=0$ is a fixed singular point changing variable $t$ into $s=1/t$.

\begin{thm}
 An equation of type (H) with fixed singular points only at $t = 0,1,\infty$ on $\mathbb{P}^1$ can be written as $0={\cal L}\, f\cdot f$, where
 %An equation of type (H) with fixed singular points only at $t = 0,1,\infty$ on $\mathbb{P}^1$ can be written as
\begin{align}
{\cal L}
% 0={\cal L}f\cdot f
 % =&\left[
 % {{\cal D}_s}^4+\left(t(t-1)+1-\beta_1-\beta_5t-\beta_6t^2
 % -2(2t-1)\frac{d}{ds}\right){{\cal D}_s}^2
 % \right.\nonumber\\
 % &\left. +t(t-1)\left(\beta_2t+\beta_3
 % +(\beta_4+\beta_5+(t+1)\beta_6)\frac{d}{ds}
 % +\frac{d^2}{ds^2}\right)
 % \right] f\cdot f\\
 =& (t-1)^4{{\cal D}_{\log t}}^4
% =&\left[ (t-1)^4{{\cal D}_{\log t}}^4
 +2(t-1)^3(t+1)\delta {{\cal D}_{\log t}}^2+t(t-1)^2\delta^2
 \\
 &+(t-1)^2(t(t-1)+1-\beta_1-\beta_5t-\beta_6t^2)
 {{\cal D}_{\log t}}^2\nonumber\\
 & +t(t-1)(\beta_4(t-1)-\beta_1-\beta_5-\beta_6)\delta
 +t(t-1)(\beta_2t+\beta_3).\nonumber
% \right] f\cdot f.\nonumber
\end{align}
\end{thm}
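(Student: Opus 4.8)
\medskip\noindent\emph{Proof strategy.}\quad The plan is to begin with the most general equation in the class $(*)$, write it in a bilinear basis, and then read off from the three type (H) requirements enough linear constraints to pin it down to the displayed family. As a first step I would fix a basis of the nine-dimensional space of constant-coefficient homogeneous quadratic differential operators of order $\le 4$ in $\delta=t\frac{d}{dt}$, say
$$\bigl\{{\cal D}_{\log t}^4,\ \delta{\cal D}_{\log t}^2,\ \delta^2{\cal D}_{\log t}^2,\ \delta^4,\ \delta^3,\ {\cal D}_{\log t}^2,\ \delta^2,\ \delta,\ 1\bigr\},$$
each acting on $f\cdot f$ (with $\delta^k$ meaning $\delta^k(f\cdot f)=\delta^k(f^2)$, while ${\cal D}_{\log t}^k$ is the Hirota operator of (\ref{Hirota})), and write any equation of type $(*)$ as $0=\sum_j c_j(t)\,{\cal O}_j\,f\cdot f$; after clearing denominators and a common factor the $c_j$ are polynomials with no common zero, and the requirement that $0,1,\infty$ be the only singular points on $\mathbb{P}^1$, together with the type (H) conditions below, will constrain their degrees. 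The key observation is that each ${\cal O}_j\,f\cdot f$ is homogeneous of degree $0$ for the grading $\deg t=1,\ \deg\frac{d}{dt}=-1$, so the lowest term of the equation at $t=0$ is simply $\sum_j c_j(0)\,{\cal O}_j\,f\cdot f$; at $t=\infty$ (after $t=1/s$, under which $\delta\mapsto-\delta_s$ and ${\cal D}_{\log t}^N\mapsto(-1)^N{\cal D}_{\log s}^N$) the same applies to the top-degree-in-$t$ part.

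With this, the conditions at $t=0$ and $t=\infty$ are essentially immediate. At $t=0$: matching $\sum_j c_j(0)\,{\cal O}_j\,f\cdot f$ with a multiple of $\bigl[{\cal D}_{\log t}^4+(1+4\alpha_0-2\delta){\cal D}_{\log t}^2\bigr]f\cdot f$, normalized to be $1$ (which fixes the overall scale), gives $c_{{\cal D}^4}(0)=1$, $c_{\delta{\cal D}^2}(0)=-2$, $c_{{\cal D}^2}(0)=1+4\alpha_0$, and $c_j(0)=0$ (so $t\mid c_j$) for the remaining six operators. At $t=\infty$: the top-$t$-degree part of the equation must lie in the span of $\{{\cal D}_{\log s}^4,\delta_s{\cal D}_{\log s}^2,{\cal D}_{\log s}^2\}$ with the ratios prescribed by the type (H) form (note the sign flip of the $\delta{\cal D}^2$ term), which forces those six operators to have strictly smaller degree than the three allowed ones and ties the leading coefficients of the latter together. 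Comparing this with the coefficient of $f\,f^{(4)}$, which equals $2t^4(c_{{\cal D}^4}+c_{\delta^2{\cal D}^2}+c_{\delta^4})$ and whose zeros are confined to $\{0,1\}$, then gives (once $c_{\delta^2{\cal D}^2}$ and $c_{\delta^4}$ are shown to vanish, below) that $c_{{\cal D}^4}=(t-1)^4$.

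The delicate step, and the heart of the matter, is the condition at $t=1$, where the Hirota operators are not homogeneous for the grading centred there. Writing $s=t-1$ and decomposing each ${\cal O}_j\,f\cdot f$ into its homogeneous components of degrees $0,-1,-2,\dots$ in the $s$-grading, one must show that the lowest-degree part of $\sum_j c_j(1+s)\,{\cal O}_j\,f\cdot f$ can agree with a nonzero multiple of the type (H) form, which lives entirely in degree $0$ (hence has no negative-degree part, and has a prescribed shape in degree $0$), only if each $c_j$ vanishes at $t=1$ to an order increasing with the weight $w_j$ of ${\cal O}_j$: essentially $\mathrm{ord}_{t=1}c_j\ge w_j$, and $\ge 1$ when $w_j=0$. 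Indeed the most negative contribution of $c_j\,{\cal O}_j$ sits in degree $\mathrm{ord}_{t=1}c_j-w_j$, and because at each weight the low-degree parts of the relevant operators are linearly independent, cancelling the negative-degree parts propagates the vanishing orders upward. Carrying out these $(t-1)$-expansions of ${\cal D}_{\log t}^4$, $\delta{\cal D}_{\log t}^2$, $\delta^2{\cal D}_{\log t}^2$, $\dots$ and verifying the required independence is where essentially all of the work lies. Combined with $t\mid c_j$ and the degree bounds from $t=\infty$, these vanishing orders already force $c_{\delta^2{\cal D}^2}\equiv c_{\delta^4}\equiv c_{\delta^3}\equiv 0$.

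It then remains to collect the surviving freedom. The coefficients are constrained to $c_{{\cal D}^4}=(t-1)^4$, $c_{\delta{\cal D}^2}=(t-1)^3(\lambda t+2)$, $c_{\delta^2}=\nu\,t(t-1)^2$, $c_{{\cal D}^2}=(t-1)^2 q(t)$, $c_\delta=t(t-1)\ell(t)$, $c_1=t(t-1)p(t)$ with $\deg q\le2$ and $\deg\ell,\deg p\le1$. The still undetermined numbers $\lambda$, $\nu$ and the coefficients of $q,\ell,p$ are pinned by requiring that the degree-$0$ part of the equation at each of $t=0,1,\infty$ coincide with the full type (H) form (not merely match its leading ratios); this forces $\lambda=2$, $\nu=1$, and leaves exactly six free parameters, which upon renaming as $\beta_1,\dots,\beta_6$ (with $\alpha_0=-\beta_1/4$, and the corresponding constant at $\infty$ equal to $-\beta_6/4$) reproduce the operator ${\cal L}$ in the statement. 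The whole argument is a large but elementary linear-algebra computation whose only subtle ingredient is the degree-counting at $t=1$ described above.
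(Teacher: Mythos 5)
Your proposal is correct and takes essentially the same route as the paper: type (H) at $t=0,\infty$ pins the lowest and highest graded parts, the location of the fixed singular locus controls the coefficient of $f\,d^4f/dt^4$, the $(t-1)$-grading forces $\mathrm{ord}_{t=1}$ of each coefficient to be at least the weight of its operator (the paper states this as divisibility of the coefficient of $\delta^k f\,\delta^l f$ by $(t-1)^{k+l}$, after first dividing the whole equation by $(t-1)^{n-4}$), and matching the degree-zero part at $t=1$ with the full type (H) form produces exactly the three relations leaving six parameters. The one loose point is the cap on the overall degree (your $c_{{\cal D}^4}=(t-1)^4$ and the vanishing of $c_{\delta^4},c_{\delta^3},c_{\delta^2{\cal D}^2}$): to avoid the circular-looking ordering, note first that the degree-zero part at $t=1$ must be a \emph{nonzero} multiple of the form, so the $f\,d^4f/dt^4$ coefficient $\pm 2t^4(t-1)^n$ must have order exactly $4$ at $t=1$, which gives $n=4$ and then your degree bounds kill the three unwanted coefficients --- this is the counterpart of the paper's reduction to $n=4$ by dividing by $u^{n-4}$.
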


\begin{proof}
 Let us express the equation in $t$, $\delta = t\, d/dt$ instead of $t$ and $d/dt$.
 Let $n$ be the degree of $\mathcal{L}$.
Since $t = 0, \infty$ is of type (H), the equation can be written as
\begin{align}
 0=&\left[{{\cal D}_{\log t}}^4+\left( 1+\alpha_1 -2\delta\right){{\cal D}_{\log t}}^2
 \right] f\cdot f\\
 &+t\sum_{0\leq l\leq k\leq 4\atop k+l\leq 4}a_{k,l}(t)\delta^k f\delta^l f
 +t^n\left[{{\cal D}_{\log t}}^4+\left( 1+\alpha_2 +2\delta\right){{\cal D}_{\log t}}^2
 \right] f\cdot f.\nonumber
\end{align}
Here, $a_{k,l}$ are polynomials of degree $n-2$.
Note that when we set $s=1/t$,
\begin{equation}
\left[ {{\cal D}_{\log t}}^4+\left( 1+\alpha_2 +2\delta\right){{\cal D}_{\log t}}^2 \right]f \cdot f=\left[ {{\cal D}_{\log s}}^4+\left( 1+\alpha_2 -2s\frac{d}{ds}\right){{\cal D}_{\log s}}^2 \right] f\cdot f
%{{\cal D}_{\log t}}^4+\left( 1+\alpha_2 +2\delta\right){{\cal D}_{\log t}}^2={{\cal D}_{\log s}}^4+\left( 1+\alpha_2 -2s\frac{d}{ds}\right){{\cal D}_{\log s}}^2
\end{equation}

Since the fixed singular point of the equation is only $t = 1$ except for $t = 0$ and $\infty$,
%Since the singular point is only $t = 1$ except for $t = 0$ and $\infty$,
the coefficient of $f\delta^4 f$ in $\mathcal{L} f \cdot f$ is $2(1+t^n)+ta_{40}(t)=2(t-1)^n$.
%the coefficient of the term of $f\delta^4 f$ is $2(1+t^n)+ta_{40}(t)=2(t-1)^n$.

Setting $u=t-1$, $\delta_u=ud/du$, we have
$\delta =(u+1)\frac{d}{du}=\frac{u+1}{u}\delta_u$, and
\begin{align}
 \delta^2
 =&(u+1)\frac{d}{du}(u+1)\frac{d}{du}=(u+1)^2\frac{d^2}{du^2}+(u+1)\frac{d}{du}\\
 =&\frac{(u+1)^2}{u^2}\delta_u(\delta_u-1)+\frac{u+1}{u}\delta_u,\nonumber\\
 \delta^3
 =&(u+1)^3\frac{d^3}{du^3}+3(u+1)^2\frac{d^2}{du^2}+(u+1)\frac{d}{du}\nonumber\\
 =&\frac{(u+1)^3}{u^3}\delta_u(\delta_u-1)(\delta_u-2)
 +\frac{3(u+1)^2}{u^2}{\delta_u}(\delta_u-1)+\frac{u+1}{u}\delta_u,\nonumber\\
 \delta^4
 =&(u+1)^4\frac{d^4}{du^4}+6(u+1)^3\frac{d^3}{du^3}+7(u+1)^2\frac{d^2}{du^2}
 +(u+1)\frac{d}{du}\nonumber\\
 =&\frac{(u+1)^4}{u^4}\delta_u(\delta_u-1)(\delta_u-2)(\delta_u-3)
 +\frac{6(u+1)^3}{u^3}\delta_u(\delta_u-1)(\delta_u-2)\nonumber\\
 &+\frac{7(u+1)^2}{u^2}{\delta_u}(\delta_u-1)+\frac{u+1}{u}\delta_u.\nonumber
\end{align}
Since the coefficient of $f\delta^4f$ in $\mathcal{L}f\cdot f$ is $2(t-1)^n=2u^n$,
%Since the coefficient of $f\delta^4f$ in $\mathcal{L}f\cdot f$ is $2u^n$,
the lowest degree in $u$ is $n-4$.
This shows that the whole equation $\mathcal{L} f \cdot f =0$ is divisible by $u^{n-4}$ when expressed by $u$,
%This shows that the whole equation is divisible by $u^{n-4}$ when expressed by $u$,
because $u=0$ is of type (H).
If we divide it, we can reduce it into the case $n = 4$.
%If we divide it in advance, we can reduce it into the case $n = 4$.
(If we take $v=-u$ as a variable, the singular points of the equation
 become $v = 0, 1,\infty$ again.)

Assuming $n = 4$, the lowest degree is 0, and the
%Assuming $n = 4$, the minimum degree is 0, and the
 coefficient of $\delta^kf\delta^lf $ must be divisible by $u^{l + k}$.
 % coefficient of $\delta^kf\delta^lf $ must be divided by $u^{l + k}$.
Therefore,
the term with $k+l=4$ in $\mathcal{L} f\cdot f$ is written as $(t-1)^4{{\cal D}_{\log t}}^4f\cdot f$,
%the term with $k+l=4$ is written as $(t-1)^4{{\cal D}_{\log t}}^4f\cdot f$,
the term with $k+l=3$ is $$2(t-1)^3(t+1)\delta {{\cal D}_{\log t}}^2f\cdot f,$$
the term with $k+l=2$ is $$(t-1)^2\left\{(1+\alpha_1+t^2+\alpha_2 t^2){{\cal D}_{\log t}}^2f\cdot f
+\alpha_3 tf\delta^2f+\alpha_4 t(\delta f)^2\right\},$$
the term with $k+l=1$ is $t(t-1)(\alpha_5t+\alpha_6)f\delta f$,
and the term with $k+l=0$ is written as
 $t(\alpha_7t^2+\alpha_8t+\alpha_9)f^2$.

Here, if the 0th-degree term in $u$ is of type (H), the condition is satisfied.
The equation is written as
\begin{align}
 0={\cal L}f\cdot f
 =&\left[ (t-1)^4{{\cal D}_{\log t}}^4
 +2(t-1)^3(t+1)\delta {{\cal D}_{\log t}}^2\right.\\
 &+(t-1)^2\left\{(1+\alpha_1+\frac{\alpha_3-\alpha_4}{4}t+(1+\alpha_2)t^2)
 {{\cal D}_{\log t}}^2 +\frac{\alpha_3+\alpha_4}{4}t\delta^2\right\}
 \nonumber\\
 &\left. +\frac12 t(t-1)(\alpha_5t+\alpha_6)\delta
 +t(\alpha_7t^2+\alpha_8t+\alpha_9)
 \right] f\cdot f,\nonumber
\end{align}
so the 0th-degree term in $u$ is
\begin{align}\label{eqn:0th_term}
 % {\cal L}f\cdot f
 % \sim
 &2f\delta_u(\delta_u-1)(\delta_u-2)(\delta_u-3)f
 -8\delta_uf \delta_u(\delta_u-1)(\delta_u-2)f\\
 &+6\{\delta_u(\delta_u-1)f\}^2
 +8f\delta_u(\delta_u-1)(\delta_u-2)f
 -8\delta_uf\delta_u(\delta_u-1)f\nonumber\\
 &+2(2+\alpha_1+\alpha_2+\frac{\alpha_3}{2})f\delta_u(\delta_u-1)f
 -2(2+\alpha_1+\alpha_2-\frac{\alpha_4}{2})(\delta_uf)^2\nonumber\\
 &+(\alpha_5+\alpha_6)f\delta_uf
 +(\alpha_7+\alpha_8+\alpha_9)f^2.\nonumber
\end{align}
Let us see each term of $\delta^kf\delta^lf$, $k + l = 4,3,2,1,0$
in the differential polynomial \eqref{eqn:0th_term}.
When $k+l=4$, it is
\begin{align}
 &2f{\delta_u}^4f-8\delta_uf{\delta_u}^3f+6({\delta_u}^2f)^2
 ={{\cal D}_{\log u}}^4f\cdot f.
\end{align}
When $k+l=3$, it is
\begin{align}
 &-12f{\delta_u}^3f+24\delta_uf{\delta_u}^2f
 -12\delta_uf{\delta_u}^2f+8f{\delta_u}^3f
 -8\delta_uf{\delta_u}^2f\\
 =&-4(f{\delta_u}^3f-\delta_uf{\delta_u}^2f)
 =-2\delta_u{{\cal D}_{\log u}}^2f\cdot f.\nonumber
\end{align}
When $k+l=2$, it is
\begin{align}
 &(2+2\alpha_1+2\alpha_2+\alpha_3)f{\delta_u}^2f
 -(6+2\alpha_1+2\alpha_2-\alpha_4)(\delta_uf)^2,
\end{align}
and we need
\begin{align}
 &\alpha_4=-\alpha_3+4.
\end{align}
The coefficeint of $f\delta_uf$ is
$-12+16-4-2\alpha_1-2\alpha_2-\alpha_3+\alpha_5+\alpha_6$.
It is necessary that
\begin{align}
 &\alpha_6=2\alpha_1+2\alpha_2+\alpha_3-\alpha_5.
\end{align}
Furthermore, from the coefficient of $f^2$,
we have
\begin{align}
 &\alpha_7+\alpha_8+\alpha_9=0.
\end{align}
After all, the six free parameters $\alpha_1$, $\alpha_2$, $\alpha_3$, $\alpha_5$,
$\alpha_7$, $\alpha_8$
remain and we obtain
\begin{align}
 0={\cal L}f\cdot f
 =&\left[ (t-1)^4{{\cal D}_{\log t}}^4
 +2(t-1)^3(t+1)\delta {{\cal D}_{\log t}}^2\right.\\
 &+(t-1)^2(1+\alpha_1-(1-\tilde{\alpha}_3)t+(1+\alpha_2)t^2)
 {{\cal D}_{\log t}}^2 +t(t-1)^2\delta^2
 \nonumber\\
 &\left. +t(t-1)(\tilde{\alpha}_5(t-1)+\alpha_1+\alpha_2+\tilde{\alpha}_3)\delta
 +t(t-1)(\tilde{\alpha}_7(t-1)+\tilde{\alpha}_8)
 \right] f\cdot f.\nonumber
\end{align}
\end{proof}

Let us look at the relationship between this equation of type (H) with
three singular points and the sixth Painlev\'e equation.

First, the equation of type (H) with 3 singular points is converted as
\begin{align}
 0={\cal L}f\cdot f
 =&\left[ (t-1)^4{{\cal D}_{\log t}}^4
 +2(t-1)^3(t+1)\delta {{\cal D}_{\log t}}^2+t(t-1)^2\delta^2
 \right.\\
 &+(t-1)^2(t(t-1)+1-\beta_1-\beta_5t-\beta_6t^2)
 {{\cal D}_{\log t}}^2\nonumber\\
 &\left. +t(t-1)(\beta_4(t-1)-\beta_1-\beta_5-\beta_6)\delta
 +t(t-1)(\beta_2t+\beta_3)
 \right] f\cdot f\nonumber\\
 % =&\left[ (t-1)^4{{\cal D}_{\log t}}^4
 % +2(t-1)^3(t+1)\delta {{\cal D}_{\log t}}^2\right.\nonumber\\
 % &+(t-1)^2(1+\alpha_1-(1-\tilde{\alpha}_3)t+(1+\alpha_2)t^2)
 % {{\cal D}_{\log t}}^2 +t(t-1)^2\delta^2
 % \nonumber\\
 % &+t(t-1)(\tilde{\alpha}_5(t-1)+\alpha_1+\alpha_2+\tilde{\alpha}_3)\delta
 % \nonumber\\
 % &\left. +t(t-1)(\tilde{\alpha}_7(t-1)+\tilde{\alpha}_8)
 % \right] f\cdot f\nonumber\\
 =&t^{2\alpha}(t-1)^{2\gamma}\left[ (t-1)^4{{\cal D}_{\log t}}^4
 +2(t-1)^3(t+1)\delta {{\cal D}_{\log t}}^2
 +t(t-1)^2\delta^2\right.\nonumber\\
 &+(t-1)^2(t(t-1)+1-\beta_1-2\alpha-(\beta_5+4\gamma)t+(-\beta_6+2\alpha+2\gamma)t^2)
 {{\cal D}_{\log t}}^2 \nonumber\\
 &+t(t-1)((\beta_4+2\alpha)(t-1)-\beta_1-\beta_5-\beta_6-2\gamma)\delta
 \nonumber\\
 &+t(t-1)((\beta_2+\alpha^2+\alpha\beta_4
 -\gamma^2+\gamma (\beta_4+\beta_6))t\nonumber\\
 &\left. +\beta_3-\alpha(\alpha+\beta_1+\beta_4+\beta_5+\beta_6)
 -\gamma\beta_1-2\alpha\gamma)
 \right] g\cdot g\nonumber
\end{align}
by the gauge transformation $f = t^\alpha (t-1)^\gamma g$.
With this transformation, it can be reduced to the case $\beta_5 = \beta_6 = 0$.

The bilinear form of the sixth Painlev\'e equation \eqref{eqn:p6_bilin}
is reduced to the case
\begin{align}
 &\beta_1=\frac{(\theta_0+1)^2+{\theta_1}^2+{\theta_t}^2+(\kappa_1-\kappa_2-1)^2}{2},\\
 &\beta_2=\frac{(\theta_0+\theta_1+1)(\theta_0-\theta_1+1)
 (\theta_t+\kappa_1-\kappa_2-1)(\theta_t-\kappa_1+\kappa_2+1)}{8},\nonumber\\
 &\beta_3=\frac{((\theta_0+1)^2-{\theta_1}^2-{\theta_t}^2+(\kappa_1-\kappa_2-1)^2)^2
 +4((\theta_0+1)^2+{\theta_1}^2)({\theta_t}^2+(\kappa_1-\kappa_2-1)^2)}{32},\nonumber\\
 &\beta_4=\beta_5=\beta_6=0.\nonumber
\end{align}
The sixth Painlev\'e equation usually has four parameters, and one other
than $\beta_1$, $\beta_2$, and $\beta_3$ can be regarded as the integral
constant that appears when integrating the fourth-order equation into
the second-order equation.
Of the general equations of type (H) with 3 singular points,
the case of $\beta_4=0$ corresponds to the sixth Painlev\'e equation.

This condition $\beta_4=0$ corresponds to the case that the fourth-order
differential equation can be integrated into the second-order equation in an elementary manner.
We will now look at that calculation.
%Let's take a look.

First, since the equation of type (H) is a homogeneous equation,
it can be reduced to a third-order equation.
By replacing the independent variable $t$ with $s$ such that $d/ds=t(t-1)d/dt$,
%By replacing the independent variable with $s$ such that $d/ds=t(t-1)d/dt$,
the equation can be written as
 \begin{align}
 0={\cal L}f\cdot f
 =&\left[
 {{\cal D}_s}^4+\left(t(t-1)+1-\beta_1-\beta_5t-\beta_6t^2
 -2(2t-1)\frac{d}{ds}\right){{\cal D}_s}^2
 \right.\\
 &\left. +t(t-1)\left(\beta_2t+\beta_3
 +(\beta_4+\beta_5+(t+1)\beta_6)\frac{d}{ds}
 +\frac{d^2}{ds^2}\right)
 \right] f\cdot f.\nonumber
 % =&\left[ (t-1)^4{{\cal D}_{\log t}}^4
 % +2(t-1)^3(t+1)\delta {{\cal D}_{\log t}}^2+t(t-1)^2\delta^2
 % \right.\nonumber\\
 % &+(t-1)^2(t(t-1)+1-\beta_1-\beta_5t-\beta_6t^2)
 % {{\cal D}_{\log t}}^2\nonumber\\
 % &\left. +t(t-1)(\beta_4(t-1)-\beta_1-\beta_5-\beta_6)\delta
 % +t(t-1)(\beta_2t+\beta_3)
 % \right] f\cdot f.
\end{align}
Dividing the whole by $2f^2$ and setting
$h=\frac{df/ds}{f}=t(t-1)\frac{df/dt}{f}$, the equation can be written
as
\begin{align}
 0=&\frac{d^3h}{ds^3}+6\left(\frac{dh}{ds}\right)^2
 -2(2t-1)\left(\frac{d^2h}{ds^2}+2h\frac{dh}{ds}\right)\\
 &+(2t(t-1)+1-\beta_1-\beta_5t-\beta_6t^2)\frac{dh}{ds}+2t(t-1)h^2\nonumber\\
 &+t(t-1)(\beta_4+\beta_5+(t+1)\beta_6)h+\frac12 t(t-1)(\beta_2t+\beta_3).\nonumber
\end{align}
Here we used
\begin{align}
 &\frac{d^3h}{ds^3}+6\left(\frac{dh}{ds}\right)^2
 =\frac{{{\cal D}_s}^4f\cdot f}{2f^2},\quad
 \frac{d^2h}{ds^2}+2h\frac{dh}{ds}
 =\frac{\frac{d}{ds}{{\cal D}_s}^2f\cdot f}{2f^2},\quad
 \frac{dh}{ds}
 =\frac{{{\cal D}_s}^2f\cdot f}{2f^2}.
\end{align}
By replacing the variable $s$ with $t$ again,
%By replacing the variable with $t$ again,
the equation is written as
\begin{align}
 0%=&t^2(t-1)^2\frac{d^3h}{dt^3}
 % +3(2t-1)t(t-1)\frac{d^2h}{dt^2}+(6t^2-6t+1)\frac{dh}{dt}\nonumber\\
 % &+6t(t-1)\left(\frac{dh}{dt}\right)^2
 % -2(2t-1)\left(t(t-1)\frac{d^2h}{dt^2}+(2t-1)\frac{dh}{dt}+2h\frac{dh}{dt}\right)\nonumber\\
 % &+(2t(t-1)+1-\beta_1)\frac{dh}{dt}+2h^2
 % +\beta_4 h+\frac12 (\beta_2t+\beta_3)\nonumber\\
 =&t^2(t-1)^2\frac{d^3h}{dt^3}+(2t-1)t(t-1)\frac{d^2h}{dt^2}
 +6t(t-1)\left(\frac{dh}{dt}\right)^2
 -4(2t-1)h\frac{dh}{dt}+2h^2\\
 &-(\beta_1+t\beta_5+t^2\beta_6)\frac{dh}{dt}
 +(\beta_4+\beta_5+(t+1)\beta_6)h+\frac12 (\beta_2t+\beta_3).\nonumber
\end{align}
Here we used
\begin{align}
 &\frac{d}{ds}=t(t-1)\frac{d}{dt},\quad
 \frac{d^2}{ds^2}=t^2(t-1)^2\frac{d^2}{dt^2}+(2t-1)t(t-1)\frac{d}{dt},\\
 &\frac{d^3}{ds^3}=t^3(t-1)^3\frac{d^3}{dt^3}
 +3(2t-1)t^2(t-1)^2\frac{d^2}{dt^2}+(6t^2-6t+1)t(t-1)\frac{d}{dt}.\nonumber
\end{align}

In the gauge transformation, $2\beta_4+\beta_5+2\beta_6$ are invariants,
and when
\begin{align}
 2\beta_4+\beta_5+2\beta_6=0
\end{align}
further integration is possible and the result is a second-order differential equation.
Substituting $\beta_4=-\frac12\beta_5-\beta_6$,
and multiplying each side by $2(d^2h/dt^2)$, we can integrate it as
\begin{align}
 0=&t^2(t-1)^2\left(\frac{d^2h}{dt^2}\right)^2\!
 +4t(t-1)\left(\frac{dh}{dt}\right)^3\!
 -4(2t-1)h\left(\frac{dh}{dt}\right)^2\!
 +4h^2\frac{dh}{dt}\\
 &-\beta_1\left(\frac{dh}{dt}\right)^2\!
 +\beta_2\left(t\frac{dh}{dt}-h\right)
 +\beta_3\frac{dh}{dt}\nonumber\\
 &+\frac{\beta_5}{2}\left(h\frac{dh}{dt}-t\left(\frac{dh}{dt}\right)^2\right)
 +\frac{\beta_6}{2}\left(2th\frac{dh}{dt}-t^2\left(\frac{dh}{dt}\right)^2-h^2\right)+C. \nonumber
\end{align}
This expression coincides with the differential equation satisfied by the Hamiltonian of the sixth Painlev\'e equation when normalized to $\beta_5=\beta_6=0$ by gauge transformation.
%This coincides with the differential equation satisfied by the Hamiltonian of the sixth Painlev\'e equation.

\bigskip
\noindent
{\it Acknowledgments} \quad
We would like to thank Prof.\ Iwaki, Prof.\ Ohyama, Prof.\ Nagoya and Prof.\ Jimbo for useful discussions.
This work was supported by JSPS KAKENHI Grant Number JP22K03348.

\end{document}